\def\ps@pprintTitle{%
 \let\@oddhead\@empty
 \let\@evenhead\@empty
 \def\@oddfoot{\centerline{\thepage}}%
 \let\@evenfoot\@oddfoot}
\newcommand{\cL}{\mathcal{L}}
\newcommand{\R}{\mathbb{R}}
\DeclareMathOperator{\trace}{trace}
\newcommand{\seclabel}[1]{\label{sec:#1}}
\newcommand{\secref}[1]{Section~\ref{sec:#1}}
\newcommand{\applabel}[1]{\label{app:#1}}
\newcommand{\appref}[1]{\ref{app:#1}}
\newcommand{\eqlabel}[1]{\label{eq:#1}}
\newcommand{\Eqref}[1]{Equation~\eqref{eq:#1}}
\newcommand{\figlabel}[1]{\label{fig:#1}}
\newcommand{\figref}[1]{Figure~\ref{fig:#1}}
\newcommand{\rref}[1]{Reference~\cite{#1}}
\newcommand{\rrefs}[1]{References~\cite{#1}}
\newcommand{\thmlabel}[1]{\label{thm:#1}}
\newcommand{\thmref}[1]{Theorem~\ref{thm:#1}}
\newcommand{\qlabel}[1]{\label{thm:#1}}
\newcommand{\qref}[1]{Question~\ref{thm:#1}}
\newcommand{\tablabel}[1]{\label{tab:#1}}
\newcommand{\tabref}[1]{Table~\ref{tab:#1}}
\newcommand\scalemath[2]{\scalebox{#1}{\mbox{\ensuremath{\displaystyle #2}}}}
\newcommand\s{\cellcolor{black!10}}
\newtheorem{theorem}{Theorem}
\newtheorem{lemma}{Lemma}
\newtheorem{corollary}{Corollary}
\theoremstyle{remark}
\newtheorem{remark}{Remark}[section]
\theoremstyle{definition}
\newtheorem{question}{Question}
\begin{document}

\begin{frontmatter}

  \title{An algorithmic exploration of the existence of high-order
    summation by parts operators with diagonal norm}

\author[ksu]{Nathan Albin\corref{cor}}
\ead{albin@math.ksu.edu}

\author[ksu]{Joshua Klarmann}
\ead{jklarm@gmail.com}

\address[ksu]{Department of Mathematics, Kansas State University, 138
  Cardwell Hall, Manhattan, KS 66506}

\cortext[cor]{Corresponding author}

\begin{abstract}
  This paper explores a common class of diagonal-norm summation by
  parts (SBP) operators found in the literature, which can be
  parameterized by an integer triple $(s,t,r)$ representing the
  interior order of accuracy ($2s)$, the boundary order of accuracy
  ($t$), and the dimension of the boundary closure ($r$).  There is no
  simple formula for determining whether or not an SBP operator exists
  for a given triple of parameters.  Instead, one must check that
  certain compatibility conditions are met: namely that a particular
  linear system of equations has a positive solution.  Partly because
  of the complexity involved, not much is known about diagonal-norm
  SBP operators with $2s>10$.

  By utilizing a new algorithm for answering the question ``Does an
  SBP operator exist for the parameters $(s,t,r)$?'', it is possible
  to explore the existence of SBP operators with high order accuracy,
  and previously unknown SBP operators with interior order of accuracy
  as large as $2s=30$ are found.  Additionally, a method for
  optimizing the spectral radius of the SBP derivative is introduced,
  and the effectiveness of this method is explored through numerical
  experiment.
\end{abstract}

\begin{keyword}
  high-order finite difference methods; summation by parts; diagonal
  energy norm
\end{keyword}
  
\end{frontmatter}

\section{Introduction}
The need for high-order numerical methods in the simulation of
long-distance advection and wave propagation is well established.  The
introduction to a 1972 paper by Kreiss and Oliger~\cite{Kreiss1972},
for example, provides a review of works dating back to the mid-1960's
indicating that the inadequacy of second-order methods for
applications in meteorology and oceanography was becoming apparent
even then.  The argument is based on the observation that the error in
a numerical solution of a hyperbolic partial differential equation
(PDE) scales roughly as a constant times $T\omega^{(p+1)}h^{p}$, where
$p$ is the order of accuracy of the numerical derivative operator, $h$
is the spatial step size, $\omega$ is a characteristic frequency of
the solution, and $T$ is the final time to which the equation is to be
solved.  Thus, in order to maintain a given level of error, the
spatial step size should be scaled as $h \sim
\omega^{-(p+1)/p}\;T^{-1/p}$.  For problems wherein $\omega$ and $T$
are very large, it is essential that $p$ be large as well, to avoid
the need for overly small $h$.

For spatial derivative approximations based on finite differences, the
combination of wide stencils to allow high-order accuracy and biased
stencils to accommodate domain boundaries presents a particular
challenge; it is very unlikely that an arbitrarily chosen high-order
finite difference operator will lead to a stable solver for a
hyperbolic initial-boundary value problem.  Because of this, much
research has been conducted in the search for stable and accurate
finite difference schemes.

\subsection{Summation by parts operators}

Among the various types of high-order finite difference operators, the
summation by parts (SBP) operators are unique in that their
construction incorporates the construction of a natural discrete
energy norm that can be used to prove stability for PDE solvers.
Numerical schemes based on SBP operators have proven effective in
simulating a wide variety of physical phenomena, including fluid
flow~\cite{Svard2007,Svard2008,Osusky2010}, elastic wave
propagation~\cite{Appelo2009,Nilsson2007,Sjogreen2012}, and orbiting
binary black holes~\cite{Pazos2009}.

The basic idea behind SBP operators (see, e.g.,
\rrefs{Kreiss1974,Strand1994,Carpenter1994}) is straightforward.  One
seeks to build, simultaneously, a finite difference operator and an
associated vector norm that mimic, in a semi-discrete setting, some
continuum energy estimate for the PDE.  The one-dimensional advection
equation on a bounded interval provides the canonical example.

Consider the PDE
\begin{equation}
  \eqlabel{pde}
  u_t + u_x = 0 \qquad x\in(0,1),\quad t>0,
\end{equation}
with suitable initial and boundary conditions.  The energy
\begin{equation*}
  \mathcal{E}_u(t) = \|u(t,\cdot)\|_{L^2}^2 = \int_0^1 u(t,x)^2 \;dx,
\end{equation*}
has the property that, for $u$ solving \Eqref{pde}, $\mathcal{E}_u$
satisfies
\begin{equation}
  \eqlabel{con-energy-eq}
  \frac{d\mathcal{E}_u}{dt} = 2\int_0^1u\,u_t\;dx
  = -2\int_0^1 u\,u_x\;dx = -\int_0^1\frac{\partial}{\partial x} u^2\;dx
  = u(t,0)^2 - u(t,1)^2.
\end{equation}

Now, consider the following semi-discrete form of \Eqref{pde}.  In
what follows, for the sake of simplifying formulas, we deviate from
convention and index arrays beginning at $0$.  Let
$\{x_i\}_{i=0}^{n-1}$ be the grid of $n$ equispaced nodes in $[0,1]$
with step size $h=1/(n-1)$, and let $v(t)$ be the $n$-vector
approximating $u$ in the method-of-lines interpretation.  That is,
$v_i(t) \approx u(t,x_i)$ solves the semi-discrete equation
\begin{equation}
  \eqlabel{semi-discrete}
  v_t + D_hv = 0,
\end{equation}
for some $n\times n$ finite difference operator $D_h$.  Emulating the
continuum case, let $P_h$ be an $n\times n$ symmetric positive
definite matrix, and define the energy
\begin{equation*}
  E_v(t) = \|v(t)\|_{P_h}^2 = v(t)^TP_h\,v(t).
\end{equation*}

If $P_h$ and $D_h$ together satisfy the condition
\begin{equation}
  \eqlabel{energy}
  P_hD_h + D_h^TP_h = e_{n-1}e_{n-1}^T - e_0e_0^T = Q,
\end{equation}
with $\{e_i\}_{i=0}^{n-1}$ the canonical basis in $\R^n$, then it is
straightforward to check that if $v$ is a solution to
\Eqref{semi-discrete}, then the energy satisfies
\begin{equation*}
  \frac{dE_v}{dt} = 
  v_t^TP_h\,v + v^TP_h\,v_t
  = -v^T\left(P_hD_h + D_h^TP_h\right)v
  = v_0^2 - v_{n-1}^2,
\end{equation*}
which is a semi-discrete analog of \Eqref{con-energy-eq}.  This
property can be used to prove the stability of the fully discrete
numerical solver.

Thus, the construction of an SBP first derivative operator (actually,
an operator/norm pair) consists of constructing $n\times n$ matrices
$D_h$ and $P_h$ with the following properties.

\begin{center}
  \begin{minipage}{0.8\linewidth}
    \begin{description}
    \item[(P1)] $D_h$ is a finite difference approximation of the
      first derivative.
    \item[(P2)] $P_h$ and $D_h$ together satisfy the energy
      condition~\eqref{eq:energy}.
    \item[(P3)] $P_h$ is a positive definite matrix.
    \end{description}
  \end{minipage}
\end{center}

\subsection{The parameters $(s,t,r)$}

Making use of the scale-invariance of the derivative and norm, namely
that $D_h=\frac{1}{h}D_1$ and $P_h=hP_1$, we now drop the subscript
$h$ and assume a step size of $h=1$.  In their general form,
properties (P1)--(P3) form a large nonlinear system of equations and
inequalities in the $n^2$ entries in $D$ and the $n(n+1)/2$ entries in
$P$.  This is undesirable for three reasons: the equations are
nonlinear, there are many of them, and their solution naturally
depends on the grid size $n$.

Fortunately, each of these problems can be treated by standard
methods~\cite{Kreiss1974,Svard2014}.  Let the positive integer
triple $(s,t,r)$ be given.  For the remainder of the paper, $P$ is
assumed to have the block diagonal form

\begin{equation}
  \eqlabel{P-form}
  P =
  \begin{bmatrix}
    \tilde{P} & 0 & 0\\
    0 & I & 0\\
    0 & 0 & \hat{P}
  \end{bmatrix} = P^T,
\end{equation}
with $\tilde{P}$ and $\hat{P}$ symmetric positive definite $r\times r$
matrices.  Furthermore, $D$ is assumed to be a centered finite
difference operator of order $2s$ in its interior $n-2r$ rows, and a
finite difference operator of order $t$ in its first $r$ and last $r$
rows.  Under these assumptions, $D$ can be factored as
\begin{equation}
  \eqlabel{D-form}
  D =
  \begin{bmatrix}
    \tilde{P}^{-1} & 0 & 0 \\ 
    0 & I & 0 \\
    0 & 0 & \hat{P}^{-1}
  \end{bmatrix}
  \begin{bmatrix}
    B & C_0 & 0 \\
    -C_0^T & \tilde{D} & -\hat{C}_0^T \\
    0 & \hat{C}_0 & \hat{B}
  \end{bmatrix},
\end{equation}
where $B$ and $\hat{B}$ are $r\times r$ block matrices, $C_0$ and
$\hat{C}_0$ are $r\times(n-2r)$ blocks of the forms
\begin{equation*}
  C_0 = \Big[ C \; 0\Big]\quad\text{and}\quad 
  \hat{C}_0 = \Big[ 0 \; \hat{C} \Big],
\end{equation*}
respectively, where $C,\hat{C}\in\R^{r\times s}$.  For example, when
$s=2$ and $r=4$, the matrix on the right-hand side of \Eqref{D-form}
has a banded structure of the form
\begin{equation*}
  \scalemath{0.6}{
    \left[
    \begin{array}{cccc:cccccc}
      \times & \times & \times & \times & \s & \s \\
      \times & \times & \times & \times & \s & \s \\
      \times & \times & \times & \times & \s \times & \s\\
      \times & \times & \times & \times & \s \times & \s \times \\
      \hdashline
      & & \times & \times & 0 & \times & \times\\
      & & & \times & \times & 0 & \times & \times\\
      & & & & \times & \times & 0 & \times & \times\\
      & & & & & \ddots & \ddots & \ddots & \ddots & \ddots
    \end{array}
  \right]
  },
\end{equation*}
where the shaded block is the matrix $C$. Since the interior $n-2r$
rows of $D$ are known, the remaining unknown quantities lie within
$\tilde{P}$, $\hat{P}$, $B$ and $\hat{B}$.

\begin{remark}
  There are, naturally, a wide variety of modifications that can be
  made to this basic structure.  For example, there is no need to
  require that the left and right closures are identical in size and
  order, nor does the interior method need to be a centered finite
  difference method.  Moreover, the underlying computational grid need
  not be uniformly spaced~\cite{Mattsson2014}, and the a much more
  general SBP framework has been developed~\cite{Fernandez2014b}.
  Despite the wide variety of modifications available, however, it is
  still interesting to consider under what circumstances the
  conventional SBP first derivative operator can exist.
\end{remark}

\subsection{The question of existence}

The main question of this paper can now be stated as follows.
\begin{question}
  Let the positive integer triple $(s,t,r)$ be given.  Does there
  exist an SBP pair, $P$ and $D$, satisfying (P1)--(P3) such that $P$
  has the form given in~\eqref{eq:P-form} and $D$ has the form given
  in~\eqref{eq:D-form} with order of accuracy $t$ in its first $r$ and
  last $r$ rows and order of accuracy $2s$ in its remaining interior
  rows?  \qlabel{main}
\end{question}

We will say that an SBP operator for the triple $(s,t,r)$ exists if
the answer to \qref{main} is affirmative, and that no such SBP
operator exists if the answer is negative.  The principal
contributions of the present work are the following.
\begin{itemize}
\item We derive a set of compatibility conditions (similar to the
  conditions of Kreiss and Scherer~\cite{Kreiss1974}) on the triple
  $(s,t,r)$ that are necessary and sufficient for the existence of an
  SBP operator (\secref{compat}).  These conditions decouple the
  problem of constructing SBP operators into a two-step process: first
  the norm $P$ is constructed, if possible, and then the derivative
  matrix $D$ is constructed.  Compatibility conditions are given for
  the general block-norm setting (\thmref{full-solvability}) and are
  specialized to the diagonal-norm setting (\thmref{solvability}).
\item Focusing on the operators with diagonal norm, we describe a
  deterministic algorithm for answering the existence question
  (\secref{algorithm}).  The algorithm is quite simple, comprising the
  solution of a linear system of equations, followed by the solution
  of a standard linear program.
\item Next, we report the results of an automated search of the
  $(s,t,r)$-space, showing the existence of diagonal-norm SBP
  operators with orders of accuracy as large as $2s=30$ in the
  interior and $t=15$ on the boundary.  (The diagonal-norm operators
  of highest order accuracy in the literature are $2s=10$ and $t=5$.)
\item We follow this exploration with the description of a new
  algorithm for optimizing the SBP derivative operator (\secref{opt})
  and demonstrate the effectiveness of some newly constructed,
  high-order SBP operators by numerical experiment
  (\secref{numerics}).
\end{itemize}

\section{Compatibility conditions}
\seclabel{compat}

In this section, we present a derivation of necessary and sufficient
compatibility conditions (similar to those given in~\cite{Kreiss1974})
designed explicitly to allow for the cases $t<s$ and $r>2s$.  It is
sufficient to treat the top rows of $D$; the bottom rows are treated
analogously.  Consider the three matrices $X\in\R^{r\times(t+1)}$,
$\tilde{X}\in\R^{s\times(t+1)}$ and $Y\in\R^{r\times(t+1)}$ defined as
\begin{equation*}
  X_{ij} = i^j,\qquad
  \tilde{X}_{ij} = (r+i)^j,\qquad\text{and}\qquad
  Y_{ij} = ji^{j-1},
\end{equation*}
where, for convenience in dealing with the upper-left entries $X_{00}$
and $Y_{00}$ in what follows, we use the notation
\begin{equation*}
  0^0 = 1
  \qquad\text{and}\qquad
  0\cdot 0^{-1} = 0.
\end{equation*}
(This is only a notational convenience to avoid the need to consider
special cases in what follows.  These conventions are never treated as
computationally valid.)

 With these definitions, $D$ is a $t$th order
derivative approximation in the first $r$ rows if and only if
\begin{equation*}
  \tilde{P}^{-1}BX + \tilde{P}^{-1}C\tilde{X} = Y\quad\text{or equivalently}\quad
  BX + C\tilde{X} = \tilde{P}Y.
\end{equation*}
Moreover, $D$ satisfies the energy condition~\eqref{eq:energy} if and
only if
\begin{equation*}
  B + B^T = -e_0e_0^T,\qquad \hat{B}+\hat{B}^T = e_{r-1}e_{r-1}^T.
\end{equation*}
Splitting $B$ into its symmetric and antisymmetric parts yields
\begin{equation*}
  B = B_1 + B_2,\quad B_1 = \frac{1}{2}(B+B^T)=-\frac{1}{2}e_0e_0^T,\qquad B_2 = \frac{1}{2}(B-B^T) = -B_2^T.
\end{equation*}
Thus, the equation for $B$ and $P$ can be written as
\begin{equation}
  \eqlabel{general-system}
  \frac{1}{2}(B-B^T)X = B_2X = \tilde{P}Y-C\tilde{X}-B_1X.
\end{equation}

The form of \Eqref{general-system} suggests a solution strategy.
First, determine conditions on the norm $\tilde{P}$---the
compatibility conditions---which guarantee solvability of
\Eqref{general-system} for $B_2$.  If such a norm exists, then
necessarily a corresponding SBP derivative operator must exist.
Conversely, if \Eqref{general-system} is not solvable for any choice
of norm $\tilde{P}$, then we may conclude that no SBP operator/norm
pair exists for the given parameters.

Let $\cL:\R^{r\times r}\to\R^{r\times(t+1)}$ be the linear operator
with action $\cL B=\frac{1}{2}(BX-B^TX)$.  Then the standard
solvability theorem of linear algebra states that
\Eqref{general-system} can be solved if and only if the right-hand
side is orthogonal to the nullspace of $\cL^*$ (with orthogonality and
the adjoint defined in some inner product).  It is convenient to use
the standard inner product on $\R^{m\times n}$:
\begin{equation*}
  \left<U,V\right> = \trace(UV^T) = \sum_{i=0}^{m-1}\sum_{j=0}^{n-1}U_{ij}V_{ij}.
\end{equation*}
Keeping in mind two fundamental properties of the matrix trace, namely
that
\begin{equation*}
  \trace(U) = \trace(U^T)\qquad\text{and}\qquad\trace(UV)=\trace(VU)
\end{equation*}
whenever the dimensions of $U$ and $V$ are compatible with both
products, we can compute the adjoint of $\cL$ as follows.
\begin{equation*}
  \begin{split}
    2\left<\cL B,A\right> &= \trace(BXA^T)-\trace(B^TXA^T) =
    \trace(BXA^T)-\trace(AX^TB) \\&= \trace(BXA^T)-\trace(BAX^T) =
    \trace(B(AX^T-XA^T)^T) = \left<B,AX^T-XA^T\right>.
  \end{split}
\end{equation*}
So $\cL^*A = \frac{1}{2}(AX^T-XA^T)$.  Thus, we have proved the
following lemma.
\begin{lemma}
  \Eqref{general-system} is solvable if and only if there exists a
  $\tilde{P}$ such that $\left<\tilde{P}Y-C\tilde{X}-B_1X,A\right>=0$
  for every $A\in\R^{r\times(t+1)}$ satisfying $AX^T=XA^T$.
\end{lemma}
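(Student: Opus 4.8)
The plan is to invoke the standard Fredholm-type solvability criterion for linear systems in finite-dimensional inner-product spaces. The equation $B_2 X = \tilde{P}Y - C\tilde{X} - B_1 X$ that we must solve is, after recognizing that $B_2$ ranges over all antisymmetric $r\times r$ matrices as $B$ ranges over $\R^{r\times r}$, equivalent to asking whether the right-hand side lies in the range of the linear map $\cL$. First I would note that $\cL B = \tfrac12(BX - B^T X)$ depends only on the antisymmetric part of $B$, and conversely every antisymmetric matrix arises as such a part, so the range of $\cL$ coincides with $\{B_2 X : B_2 = -B_2^T\}$, which is exactly the set of possible left-hand sides of \Eqref{general-system}. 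Hence \Eqref{general-system} is solvable for some $B$ (equivalently some $B_2$) if and only if the right-hand side $\tilde{P}Y - C\tilde{X} - B_1 X$ belongs to $\operatorname{range}(\cL)$.

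Next I would apply the orthogonal decomposition $\R^{r\times(t+1)} = \operatorname{range}(\cL) \oplus \operatorname{range}(\cL)^\perp$ together with the identity $\operatorname{range}(\cL)^\perp = \operatorname{null}(\cL^*)$, which holds in any finite-dimensional inner-product space. This reduces solvability to the condition that the right-hand side be orthogonal to $\operatorname{null}(\cL^*)$. Using the computation already carried out in the excerpt, $\cL^* A = \tfrac12(A X^T - X A^T)$, so $A \in \operatorname{null}(\cL^*)$ precisely when $A X^T - X A^T = 0$, i.e. $A X^T = X A^T$. Substituting this characterization into the orthogonality requirement gives exactly the statement of the lemma: \Eqref{general-system} is solvable iff there exists $\tilde{P}$ with $\langle \tilde{P}Y - C\tilde{X} - B_1 X, A\rangle = 0$ for every $A \in \R^{r\times(t+1)}$ satisfying $A X^T = X A^T$.

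The two steps I expect to require the most care are (i) justifying that quantifying over $B \in \R^{r\times r}$ in \Eqref{general-system} is the same as quantifying over the range of $\cL$ — this needs the observation that $B_1 = -\tfrac12 e_0 e_0^T$ is fixed while $B_2$ is a free antisymmetric matrix, and that $\cL$ "sees" only $B_2$; and (ii) being careful that the adjoint $\cL^*$ is taken with respect to the same trace inner product on \emph{both} $\R^{r\times r}$ and $\R^{r\times(t+1)}$, so that the orthogonal-complement identity applies verbatim. Neither is deep: (i) is a one-line linearity remark and (ii) was already handled by the explicit trace manipulation in the excerpt. The only genuinely substantive ingredient is the finite-dimensional Fredholm alternative $\operatorname{range}(\cL) = \operatorname{null}(\cL^*)^\perp$, which is classical and may simply be cited. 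So the proof is essentially a direct assembly of facts already in hand; the main "obstacle" is merely bookkeeping about which space each operator acts on, rather than any real difficulty.
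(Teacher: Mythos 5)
Your proposal is correct and follows essentially the same route as the paper: identify the set of admissible left-hand sides of \Eqref{general-system} with the range of $\cL$, invoke the finite-dimensional identity $\operatorname{range}(\cL)=N(\cL^*)^\perp$, and use the trace computation of $\cL^*A=\tfrac{1}{2}(AX^T-XA^T)$ to characterize the nullspace. The paper states this only in outline (``the standard solvability theorem of linear algebra''), so your added care about $\cL$ seeing only the antisymmetric part of $B$ is a welcome but not substantively different elaboration.
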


This lemma provides the first form of the compatibility condition.  An
SBP operator for the parameter triple $(s,t,r)$ exists if and only if
one can find a positive definite matrix $\tilde{P}$ such that the
right-hand side of \Eqref{general-system} is orthogonal to the
nullspace $N(\cL^*)$ of $\cL^*$, or equivalently, if and only if the
right-hand side is orthogonal to each element of a basis for
$N(\cL^*)$.  A basis is given by the following lemma.

\begin{lemma}
  A matrix $A$ satisfies $AX^T=XA^T$ if and only if $A=XE$ for some
  $E=E^T\in\mathbb{R}^{(t+1)\times(t+1)}$.
\end{lemma}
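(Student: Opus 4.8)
Before seeing the authors' argument, here is how I would approach the statement. The ``if'' direction is immediate: if $A = XE$ with $E = E^{T}$, then $AX^{T} = XEX^{T}$ is symmetric, so $AX^{T} = (AX^{T})^{T} = XA^{T}$. Thus the content is entirely in the ``only if'' direction, and the plan is to produce, directly from the hypothesis $AX^{T} = XA^{T}$, an explicit symmetric $E$ solving $XE = A$.

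The one structural fact I would record about $X$ is that, since $X_{ij} = i^{j}$ with the convention $0^{0} = 1$ (precisely the entry a Vandermonde matrix should have), $X$ is the Vandermonde matrix of the $r$ distinct nodes $0, 1, \dots, r-1$; hence $\operatorname{rank} X = \min(r, t+1)$, because either its first $r$ columns (when $r \le t+1$) or its first $t+1$ rows (when $r \ge t+1$) form a nonsingular Vandermonde matrix. Writing $X^{+}$ for the Moore--Penrose pseudoinverse, this says that $X^{+}X = I_{t+1}$ when $r \ge t+1$ and $XX^{+} = I_{r}$ when $r \le t+1$. The case $r < t+1$ is the only subtle one: there $E \mapsto XE$ is not injective, so $E$ is not determined by $XE = A$, and one must choose the right preimage. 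The candidate I would propose is
\[
  E = X^{+}A + (X^{+}A)^{T} - X^{+}AX^{T}(X^{+})^{T},
\]
and then verify the two required properties. Symmetry of $E$ is easy: the first two terms have the form $Z + Z^{T}$, and the third term is symmetric because its transpose is $X^{+}(XA^{T})(X^{+})^{T}$, which equals the third term after invoking the hypothesis $XA^{T} = AX^{T}$. For $XE = A$, using $X^{T}(X^{+})^{T} = (X^{+}X)^{T} = X^{+}X$ and the hypothesis once more, a short computation collapses $XE$ to $XX^{+}A + AX^{+}X - XX^{+}AX^{+}X = A - (I - XX^{+})A(I - X^{+}X)$; the correction term vanishes because one of the factors $I - XX^{+}$, $I - X^{+}X$ is zero by the rank statement above.

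I expect the only place needing care to be the final reduction $XE = A - (I - XX^{+})A(I - X^{+}X)$: it is just bookkeeping with the four defining Moore--Penrose identities, but one has to track where $AX^{T} = XA^{T}$ is used (exactly once, to rewrite the middle term). Should one prefer to avoid pseudoinverses, the alternative is to split into cases: when $r \ge t+1$, the hypothesis forces $\operatorname{col} A \subseteq \operatorname{col} X$, so $A = XF$, and cancelling the now left- and right-invertible factors in $X(F - F^{T})X^{T} = 0$ yields $F = F^{T}$; the case $r < t+1$ then reduces to this by extending $A$ to the square Vandermonde system on nodes $0, \dots, t$. Either way, the full-rank property of Vandermonde matrices is the single external ingredient.
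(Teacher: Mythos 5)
Your proof is correct, and its main line is genuinely different from (and more general than) the paper's. The paper argues only in the full-column-rank regime: since $X^TX$ is invertible when $r\ge t+1$, it writes $A=AX^TX(X^TX)^{-1}=XA^TX(X^TX)^{-1}=XE$ and then cancels $X^TX$ on both sides of $X^TXEX^TX=X^TXE^TX^TX$ to get $E=E^T$ --- essentially the ``normal equations'' version of your case split, and in fact your candidate $E$ collapses to $X^+A=(X^TX)^{-1}X^TA$ (the transpose of the paper's $E=A^TX(X^TX)^{-1}$) when $r\ge t+1$. What your pseudoinverse construction buys is a uniform treatment of the rank-deficient case $r<t+1$, where $E\mapsto XE$ is not injective and the paper's opening step ``since $X$ has full column rank'' simply does not apply; note that this case is not vacuous in the paper, since Table~1 lists $(s,t,r)=(1,1,1)$, for which $X$ is $1\times 2$. (The downstream Corollary is unaffected either way, since it only needs the matrices $X(e_pe_q^T+e_qe_p^T)$ to span the nullspace of $\cL^*$, not to be independent.) The cost of your route is carrying the four Moore--Penrose identities and the observation $X^T(X^+)^T=X^+X$ through the bookkeeping, versus the paper's two-line cancellation; your closing ``alternative'' sketch via $\operatorname{col}A\subseteq\operatorname{col}X$ and cancelling left-/right-invertible factors is essentially the paper's argument verbatim. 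Both verifications you defer (symmetry of the third term via $AX^T=XA^T$, and the reduction $XE=A-(I-XX^+)A(I-X^+X)$ with one projector vanishing by the Vandermonde rank fact) check out.
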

\begin{proof}
  Suppose $AX^T=XA^T$.  Since $X$ has full column rank, $X^TX$ is
  invertible, and, thus, 
  \begin{equation*}
    A = AX^TX(X^TX)^{-1} = XA^TX(X^TX)^{-1} = XE.
  \end{equation*}
  To see that $E$ is symmetric, observe that $XEX^T=AX^T=XA^T=XE^TX^T$
  and so $X^TXEX^TX = X^TXE^TX^TX$. Since $X^TX$ is invertible,
  $E=E^T$.  The converse is straightforward.
\end{proof}
\begin{corollary}
  \Eqref{general-system} is solvable if and only if there exists a
  $\tilde{P}$ such that
  \begin{equation}
    \eqlabel{FAT}
    \left<\tilde{P}Y-C\tilde{X}-B_1X,X(e_pe_q^T + e_qe_p^T)\right>=0
    \qquad\text{for all }\quad p = 0,1,\ldots,t,\quad q = p, p+1, \ldots, t.
  \end{equation}
\end{corollary}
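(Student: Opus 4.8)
The plan is to obtain \eqref{eq:FAT} by concatenating the two lemmas just proved and then trading a subspace for a spanning set. First I would invoke the second lemma above to rewrite the condition appearing in the first lemma: a matrix $A\in\R^{r\times(t+1)}$ satisfies $AX^T=XA^T$ precisely when $A=XE$ for some symmetric $E\in\R^{(t+1)\times(t+1)}$. Substituting this characterization, \Eqref{general-system} is solvable if and only if there is a positive definite $\tilde{P}$ with $\langle \tilde{P}Y-C\tilde{X}-B_1X,\,XE\rangle=0$ for every symmetric $E$.

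Next I would use the fact that the space of symmetric $(t+1)\times(t+1)$ matrices is spanned by the family $\{e_pe_q^T+e_qe_p^T : 0\le p\le q\le t\}$ (for $p=q$ this is $2e_pe_p^T$, which still spans the diagonal part). Because the map $E\mapsto XE$ is linear, the set $\{XE : E=E^T\}$ equals the linear span of $\{X(e_pe_q^T+e_qe_p^T) : 0\le p\le q\le t\}$. Moreover, for a fixed $\tilde{P}$ the functional $A\mapsto\langle \tilde{P}Y-C\tilde{X}-B_1X,\,A\rangle$ is linear, so it vanishes on the whole subspace $\{XE : E=E^T\}$ if and only if it vanishes on this spanning family. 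Feeding this equivalence back into the statement from the first paragraph produces exactly the system \eqref{eq:FAT}, for all $p=0,1,\ldots,t$ and $q=p,p+1,\ldots,t$.

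I do not anticipate any genuine obstacle: the corollary is a purely bookkeeping consequence of the two lemmas. The only points deserving a sentence of justification are the standard facts that the matrices $e_pe_q^T+e_qe_p^T$ span the symmetric matrices and that a linear functional annihilates a subspace if and only if it annihilates a spanning set of that subspace.
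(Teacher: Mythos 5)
Your proposal is correct and matches the paper's (implicit) argument exactly: the paper likewise combines the two lemmas, noting that orthogonality to the nullspace of $\cL^*$ is equivalent to orthogonality to a basis, and that the matrices $X(e_pe_q^T+e_qe_p^T)$ for $0\le p\le q\le t$ span $\{XE : E=E^T\}$. Nothing further is needed.
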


Before forming the entire inner product in \Eqref{FAT}, it is useful
to compute a few separate terms. First, observe that for any matrix
$Z\in\mathbb{R}^{r\times(t+1)}$
\begin{equation*}
  \left<Z,Xe_pe_q^T\right> = \trace(Ze_q(Xe_p)^T) = \trace((Xe_p)^TZe_q)
  = (Xe_p)^TZe_q.
\end{equation*}
Now, considering the first term in the inner product in \Eqref{FAT},
we find
\begin{equation*}
  \left<\tilde{P}Y,Xe_pe_q^T\right> = (Xe_p)^T(\tilde{P}Ye_q)
  = \sum_{k=0}^{r-1}X_{kp}\sum_{\ell=0}^{r-1}\tilde{P}_{k\ell}Y_{\ell q}
  = \sum_{k=0}^{r-1}\sum_{\ell=0}^{r-1}qk^p\ell^{q-1}\tilde{P}_{k\ell}.
\end{equation*}
Turning to the second form in the right-hand side, we find
\begin{equation*}
  \left<C\tilde{X},Xe_pe_q^T\right> = (Xe_p)^T(C\tilde{X}e_q)
  = \sum_{k=0}^{r-1}X_{kp}\sum_{\ell=0}^{s-1}C_{k\ell}\tilde{X}_{\ell q}
  = \sum_{k=0}^{r-1}\sum_{\ell=0}^{s-1}k^p(r+\ell)^qC_{k\ell}.
\end{equation*}
Finally, we find
\begin{equation*}
  \left<B_1X,Xe_pe_q^T\right> = (Xe_p)^T(B_1Xe_q)
  = \sum_{k=0}^{r-1}X_{kp}\sum_{\ell=0}^{r-1}(B_1)_{k\ell}X_{\ell q}
  = -\frac{1}{2}\sum_{k=0}^{r-1}\sum_{\ell=0}^{r-1}k^p\ell^q\delta_{k0}\delta_{\ell 0}
  = -\frac{1}{2}\delta_{p0}\delta_{q0}.
\end{equation*}
Combining the above computations, we arrive at a system of $(t+1)(t+2)/2$
equations for $\tilde{P}$.
\begin{theorem}
  \thmlabel{full-solvability} An SBP operator with parameters
  $(s,t,r)$ exists if and only if there is a positive definite matrix
  $\tilde{P}$ satisfying
  \begin{equation}
    \eqlabel{full-p-system}
    \sum_{k=0}^{r-1}\sum_{\ell=0}^{r-1}\left(qk^p\ell^{q-1}+pk^q\ell^{p-1}\right)\tilde{P}_{k\ell}
    = \sum_{k=0}^{r-1}\sum_{\ell=0}^{s-1}\left(k^p(r+\ell)^q+k^q(r+\ell)^p\right)C_{k\ell}
    - \delta_{p0}\delta_{q0}
  \end{equation}
  for every $p=0,1,\ldots,t$ and $q=p,p+1,\ldots,t$.
\end{theorem}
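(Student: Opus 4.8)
The plan is to combine the equivalences already assembled in this section with a direct expansion of the inner product in \Eqref{FAT}. The starting point is the observation, made in the discussion around \eqref{eq:D-form}, that for an operator of the form \eqref{eq:P-form}--\eqref{eq:D-form} the blocks $C$, $\tilde{D}$ and $\hat{C}$ are fixed by the prescribed interior $2s$-order stencil, and the symmetric part $B_1=-\tfrac12 e_0e_0^T$ is fixed, so the only free data are $\tilde{P}$, $\hat{P}$ and the antisymmetric parts $B_2$, $\hat{B}_2$. Here (P3) is exactly the requirement that $\tilde{P}$ and $\hat{P}$ be positive definite; the interior rows of (P1) hold by construction; and, as recorded above, the order-$t$ requirement on the first $r$ rows of $D$ is precisely $BX+C\tilde{X}=\tilde{P}Y$, i.e. \Eqref{general-system}, while the energy condition \eqref{eq:energy} amounts to $B+B^T=-e_0e_0^T$ together with $\hat{B}+\hat{B}^T=e_{r-1}e_{r-1}^T$. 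Since the top- and bottom-closure systems involve disjoint unknowns, (P1)--(P3) hold for some such pair if and only if there are positive definite $\tilde{P}$ and $\hat{P}$ with $B_2$ solving \Eqref{general-system} and $\hat{B}_2$ solving the analogous bottom-closure system; and because the bottom-closure conditions are carried onto the top-closure ones by reversing the node order and flipping appropriate signs, a positive definite $\hat{P}$ with the required property exists exactly when a positive definite $\tilde{P}$ making \Eqref{general-system} solvable for $B_2$ does. So an SBP operator for $(s,t,r)$ exists if and only if there is a positive definite $\tilde{P}$ for which \Eqref{general-system} is solvable for $B_2$.

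By the corollary above, for a fixed $\tilde{P}$ the equation \Eqref{general-system} is solvable for $B_2$ exactly when \Eqref{FAT} holds for every $0\le p\le q\le t$. Combining this with the previous paragraph, the theorem reduces to the purely algebraic assertion that, for each such pair $(p,q)$, the single scalar equation \Eqref{FAT} coincides with \eqref{eq:full-p-system}.

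To establish that, I would expand the inner product in \Eqref{FAT} by bilinearity: writing the matrix as $Xe_pe_q^T+Xe_qe_p^T$ splits it into $\langle\tilde{P}Y-C\tilde{X}-B_1X,\,Xe_pe_q^T\rangle$ plus its $p\leftrightarrow q$ counterpart, into which I substitute the three evaluations already computed just after \Eqref{FAT}, namely $\langle\tilde{P}Y,Xe_pe_q^T\rangle=\sum_{k,\ell}qk^p\ell^{q-1}\tilde{P}_{k\ell}$, $\langle C\tilde{X},Xe_pe_q^T\rangle=\sum_{k,\ell}k^p(r+\ell)^qC_{k\ell}$, and $\langle B_1X,Xe_pe_q^T\rangle=-\tfrac12\delta_{p0}\delta_{q0}$. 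Adding each evaluation to its version with $p$ and $q$ interchanged symmetrizes the three terms into $\sum_{k,\ell}(qk^p\ell^{q-1}+pk^q\ell^{p-1})\tilde{P}_{k\ell}$, into $\sum_{k,\ell}(k^p(r+\ell)^q+k^q(r+\ell)^p)C_{k\ell}$, and into $-\delta_{p0}\delta_{q0}$; setting the symmetrized inner product to zero and rearranging produces exactly \eqref{eq:full-p-system}.

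There is no genuine obstacle here, only bookkeeping, and two small points deserve attention. First, one must check that the conventions $0^0=1$ and $0\cdot0^{-1}=0$ make the rows with $p=0$ or $q=0$ behave correctly; for instance the $p=q=0$ case collapses \eqref{eq:full-p-system} to the single normalization $2\sum_{k,\ell}C_{k\ell}=1$. Second, when $p=q$ the factor $2$ coming from $Xe_pe_q^T+Xe_qe_p^T=2Xe_pe_p^T$ multiplies the $\tilde{P}$-side, the $C$-side and the $\delta$-side uniformly, so the diagonal equations are scalar multiples of what one would write naively and remain consistent with the off-diagonal ones. Neither point requires a new idea.
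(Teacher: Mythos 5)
Your proposal is correct and follows essentially the same route as the paper: it invokes the corollary reducing solvability of \Eqref{general-system} to the orthogonality conditions \Eqref{FAT}, substitutes the three inner-product evaluations already computed in the text, and symmetrizes in $p$ and $q$ to obtain \eqref{eq:full-p-system}, with the bottom closure handled by the same symmetry argument the paper mentions. The added remarks on the $0^0$ convention and the harmless factor of $2$ when $p=q$ are accurate bookkeeping and do not change the argument.
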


\subsection{SBP operators with diagonal norm}

The remainder of the paper is restricted to the case that $P_h$ is
diagonal, which is quite natural due to the fact that these are the
only SBP operators for which standard techniques exist for proving
stability for PDEs with variable coefficients or on multi-dimensional
curvilinear grids~\cite{Svard2004}.  Although SBP operators with
non-diagonal (block) norm have recently been shown to be stabilizable
on curvilinear grids~\cite{Mattsson2013} by the addition of a tuned
artificial damping term, the question of existence of diagonal-norm
SBP operators remains an interesting open problem; no such operators
with interior order greater than 10 exist in the literature.  Under
the assumption that $P_h$ is diagonal, \thmref{full-solvability}
simplifies somewhat.
\begin{theorem}
  \thmlabel{solvability} An SBP operator with parameters $(s,t,r)$ and
  diagonal $P_h$ exists if and only if there is a diagonal positive
  definite matrix $\tilde{P}$ satisfying
  \begin{equation}
    \eqlabel{p-system}
    \sum_{k=0}^{r-1}(p+q)k^{p+q-1}\tilde{P}_{kk}
    = \sum_{k=0}^{r-1}\sum_{\ell=0}^{s-1}\left(k^p(r+\ell)^q+k^q(r+\ell)^p\right)C_{k\ell}
    - \delta_{p0}\delta_{q0}
  \end{equation}
  for every $p=0,1,\ldots,t$ and $q=p,p+1,\ldots,t$.
\end{theorem}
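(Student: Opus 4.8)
The plan is to derive \thmref{solvability} as a direct specialization of \thmref{full-solvability} under the hypothesis that $P_h$ (hence $\tilde{P}$) is diagonal. By \thmref{full-solvability}, an SBP operator with parameters $(s,t,r)$ exists if and only if there is a positive definite $\tilde{P}$ satisfying \eqref{eq:full-p-system} for all $p=0,\dots,t$ and $q=p,\dots,t$. The only thing that changes when we additionally require $\tilde{P}$ to be diagonal is the left-hand side of \eqref{eq:full-p-system}, since the right-hand side does not involve $\tilde{P}$ at all. So the proof reduces to evaluating the double sum $\sum_{k}\sum_{\ell}\bigl(qk^p\ell^{q-1}+pk^q\ell^{p-1}\bigr)\tilde{P}_{k\ell}$ when $\tilde{P}_{k\ell}=\tilde{P}_{kk}\,\delta_{k\ell}$, and observing that the positive-definiteness requirement on a diagonal matrix is exactly positivity of its diagonal entries.

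First I would substitute $\tilde{P}_{k\ell}=\tilde{P}_{kk}\delta_{k\ell}$ into the left-hand side of \eqref{eq:full-p-system}. The term $\sum_{k,\ell}qk^p\ell^{q-1}\tilde{P}_{k\ell}$ collapses to $\sum_{k=0}^{r-1}q\,k^p k^{q-1}\tilde{P}_{kk}=\sum_{k=0}^{r-1}q\,k^{p+q-1}\tilde{P}_{kk}$, and symmetrically the second term becomes $\sum_{k=0}^{r-1}p\,k^{p+q-1}\tilde{P}_{kk}$. Adding these gives $\sum_{k=0}^{r-1}(p+q)k^{p+q-1}\tilde{P}_{kk}$, which is precisely the left-hand side of \eqref{eq:p-system}. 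Here one should note that the conventions $0^0=1$ and $0\cdot 0^{-1}=0$ are what make the $k=0$ term behave correctly in the case $p+q=0$ (i.e. $p=q=0$), so that the collapsed sum still matches \eqref{eq:full-p-system} on the nose; I would remark on this briefly to reassure the reader that no boundary case is being swept under the rug.

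Second I would record that for a diagonal matrix $\tilde{P}=\diag(\tilde{P}_{00},\dots,\tilde{P}_{r-1,r-1})$, positive definiteness is equivalent to $\tilde{P}_{kk}>0$ for every $k$; this is the standard characterization and needs no argument. Combining the two observations: a diagonal positive definite $\tilde{P}$ solving \eqref{eq:full-p-system} exists if and only if a diagonal positive definite $\tilde{P}$ solving \eqref{eq:p-system} exists, which by \thmref{full-solvability} (restricted to diagonal $\tilde{P}$, which is exactly the diagonal-norm hypothesis) is equivalent to the existence of a diagonal-norm SBP operator with parameters $(s,t,r)$. I do not anticipate a genuine obstacle here — the proof is essentially a one-line substitution — so the only real care needed is in checking that the $0^0$ and $0\cdot 0^{-1}$ conventions propagate consistently through the specialization, and in stating clearly that imposing diagonality on $\tilde{P}$ is the correct reading of the hypothesis "$P_h$ is diagonal" given the block form \eqref{eq:P-form}.
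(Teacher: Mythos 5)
Your proposal is correct and matches the paper's (implicit) argument exactly: the paper offers no written proof of \thmref{solvability}, stating only that \thmref{full-solvability} ``simplifies'' when $P_h$ is diagonal, and your substitution $\tilde{P}_{k\ell}=\tilde{P}_{kk}\delta_{k\ell}$ collapsing the double sum to $\sum_{k}(p+q)k^{p+q-1}\tilde{P}_{kk}$, together with the observation that positive definiteness of a diagonal matrix is positivity of its diagonal, is precisely that simplification. Your care with the $0^0=1$ and $0\cdot 0^{-1}=0$ conventions (which are unproblematic here since $q\ge p$) is a welcome bit of extra rigor.
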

\begin{remark}
  Since the left-hand side of \Eqref{p-system} is zero when $p=q=0$,
  the SBP operator can only exist if $\sum_{k,\ell}C_{k\ell} =
  \frac{1}{2}$.  If a generic row of the centered difference portion
  of $D$ has the coefficients
  \begin{equation*}
    \begin{bmatrix}
      \cdots & -\alpha_s & \cdots & \alpha_{-1} & 0 & \alpha_1 & \cdots & \alpha_s & \cdots
    \end{bmatrix}
  \end{equation*}
  then
  \begin{equation*}
    \sum_{k,\ell}C_{k\ell} = \sum_{i=1}^s i\alpha_i,
  \end{equation*}
  so the $p=q=0$ equation is simply the requirement that the centered
  difference operator evaluate derivatives of linear functions
  exactly, which will always be true in the present setting.  The
  remaining $t(t+3)/2$ equations do involve the $r$ unknowns in
  $\tilde{P}$.
\end{remark}

\begin{remark}
  \label{rem:square}
  Although \Eqref{p-system} appears to be an overdetermined system if
  $r<t(t+3)/2$, it is known (see \cite[Theorem 2.1]{Kreiss1974}) that,
  when $r=s=2t$, \Eqref{p-system} has a unique solution.  Moreover,
  by~\cite[Corollary 1]{Hicken2013}, if a norm $P$ exists, then the
  system must consistent with the requirement that the norm matrix $P$
  act as a $2s$-order quadrature rule, providing a lower bound on the
  number of linearly independent equations that must be present for
  solvability.  In this paper, we will not concern ourselves with
  locating the linearly independent equations since \Eqref{p-system}
  is sufficient for our purposes.
\end{remark}

\begin{remark}
  \label{rem:non-square}
  It is important that $r$ be allowed to vary independently of $s$,
  since it appears (see \secref{smallest-r}) that if $s=t\ge 5$ and
  $r=2s$, then the unique solution to \Eqref{p-system} is not positive
  and, therefore, that in general SBP operators do not exist for
  $(s,t,r)=(s,s,2s)$.
\end{remark}

\section{Algorithm for existence}
\seclabel{algorithm}

This section describes a method for algorithmically deciding the
answer to \qref{main} for given parameters $(s,t,r)$.  That is, the
algorithm presented here determines whether or not such an
operator/norm pair exists, but does not completely construct one.  The
construction, assuming existence is known, is postponed until
\secref{opt}.  The entire process that follows is performed in exact
arithmetic (i.e., using rational numbers) since the solvability of
\Eqref{general-system} requires that $\tilde{P}$ be an exact solution
to \Eqref{p-system}.  If $\tilde{P}$ is only an approximate solution,
then \Eqref{general-system} is not solvable.  Of course, there should
be approximate solutions to \Eqref{general-system} in this case, but
the details of a finite precision implementation of this algorithm
remains an open question (see \secref{conclusion}).  For the results
presented in this paper, the Python library
\texttt{sympy}~\cite{sympy} was used for exact arithmetic.

\subsection{Solve the linear system} To initialize the algorithm,
\Eqref{p-system} is put into the form of a matrix-vector equation
$Ax=b$ with $A$ a $[(t+1)(t+2)/2]\times r$ matrix and $x$ the vector
of unknowns $\tilde{P}_{kk}$.  This can be done in exact arithmetic by
Gaussian elimination.  As stated in Remarks~\ref{rem:square}
and~\ref{rem:non-square}, if $r=2s=2t$, \Eqref{p-system} has a unique
solution.  In this case, the answer to \qref{main} is immediate.  If
$x$ is positive, an SBP operator exists for the given parameters.  If
$x$ has any non-positive entries, no SBP operator exists.  This is
exactly Theorem~2.1 of~\cite{Kreiss1974}.  For other choices of
$(s,t,r)$, however, there is typically a solution manifold of the form
\begin{equation*}
  x = x_0 + Gy,
\end{equation*}
where $G$ is $r\times v$, with $v$ the number of degrees of freedom in
the solution.  In this case, more work is required to determine if
there is a solution with positive entries.

\subsection{Solve the LP problem}
\seclabel{lp}

If the previous steps produced a manifold of solutions to the linear
system in \Eqref{p-system}, \qref{main} is equivalent to asking whether
there exists a $y\in\R^v$ such that $x_0+Gy$ has all positive entries.
To see how this problem can be solved algorithmically, consider the
following optimization problem
\begin{equation}
  \eqlabel{opt}
  \begin{split}
    \text{maximize}   &\qquad\min_ix_i, \\
    \text{subject to} &\qquad x = x_0 + Gy,
  \end{split}
\end{equation}
and note that \Eqref{p-system} has a strictly positive solution if and
only if the value of the optimization problem is strictly positive.
The optimization problem can be algorithmically solved through a
common technique that transforms the problem into a standard linear
program (LP):
\begin{equation}
  \eqlabel{lp}
  \begin{split}
    \text{maximize}   &\qquad \eta, \\
    \text{subject to} &\qquad x_i\ge\eta,\quad i=0,1,\ldots,r-1, \\
    &\qquad x = x_0 + Gy.
  \end{split}  
\end{equation}
As an LP, \Eqref{lp} can be solved by the simplex method, thus
providing an algorithm for solving \Eqref{opt}.  In this case, we
conclude that the SBP operator exists if and only if the solution to
\Eqref{lp} is positive.  From an implementation perspective, this is
the most complex step, as it requires a simplex solver in exact
arithmetic.  We did not find an existing library for this, and so
implemented our own simplex solver in Python.

\begin{remark}
  \label{rem:choice-of-P}
  By openness, it is clear that if a manifold of solutions exists and
  if there is one positive solution, then there are infinitely many
  positive solutions.  In this case, it is not clear which choice of
  $P$ is ``best'' in any particular sense.  In this paper, we choose
  the $P$ maximizing \Eqref{opt} as a particular choice.  This is not
  quite arbitrary, as described in Remark~\ref{rem:best-P-norm}.
\end{remark}

\section{Existence and nonexistence of SBP operators}
\seclabel{existence}

\begin{table}
  \centering
  \begin{tabular}{cccccl}
    $s$ & $t$ & $r$ & dof $P_h$  & dof $D_h$ & $\min_ix_i$ \\
    \hline
    1 & 1 & 1 & 0 & 0 & 5.000e-01 \\
    2 & 2 & 4 & 0 & 0 & 3.541e-01 \\
    3 & 3 & 6 & 0 & 1 & 3.159e-01 \\
    4 & 4 & 8 & 0 & 3 & 2.575e-01 \\
    5 & 5 & 11 & 1 & 10 & 2.077e-01 \\
    6 & 6 & 14 & 2 & 21 & 9.683e-03 \\
    7 & 7 & 19 & 5 & 55 & 1.907e-01 \\
    8 & 8 & 23 & 7 & 91 & 4.652e-02  \\
    9 & 9 & 28 & 10 & --- & 4.622e-02 \\
    10 & 10 & 34 & 14 & --- & 8.357e-02\\
    11 & 11 & 40 & 18 & --- & 3.907e-02\\
    12 & 12 & 47 & 23 & --- & 5.286e-02\\
    13 & 13 & 54 & 28 & --- & 1.933e-02\\
    14 & 14 & 62 & 34 & --- & 1.863e-02\\
    15 & 15 & 71 & 41 & --- & 4.559e-02 \\
  \end{tabular}
  \caption{For the case $t=s$, the table reports the \emph{smallest} value of 
    $r$ for which an SBP operator exists.  When $P_h$ is non-unique, 
    the number of degrees of freedom in $P_h$ is reported as 
    ``dof $P_h$''.  The
    value of \Eqref{opt} is reported as $\min_ix_i$.  For the optimal 
    $P_h$, the degrees of freedom of $D_h$ is reported as
    ``dof $D_h$''.  Since the computation of the solution space in $D_h$ 
    is much more computationally taxing than that of $P_h$, only the 
    cases with $s\le 8$ include the dimension of the solution space for 
    $D_h$.}
  \tablabel{s-results}
\end{table}

This section presents some novel results based on the algorithm of the
previous section.  It is worth remarking that, although the algorithm
is provably correct, the following computational results rely on
computer-assisted proof.  The numerators and denominators of the
rational numbers involved are sufficiently large that we cannot hope
to perform the Gaussian elimination and simplex method steps by hand
except in a small number of cases.  For example, the value for $x_0$
in the case $s=t=8$, $r=23$ is
\begin{equation*}
  x_0=\frac{83852077150009258297147}{299027329581685985280000}.
\end{equation*}
Although we have done our best to test our code and to compare with
SBP results in the literature, the results presented in this text are
nevertheless vulnerable to errors either in the \texttt{sympy}
rational number manipulation routines, the Gaussian elimination
routine or in the simplex solver.  As an example, an earlier version
of the code produced incorrect results from time to time due to some
unexpected behavior in the symbolic operations of a particular
commercial software tool.  We have a high degree of confidence in the
computational results presented in this paper, but certainly encourage
their verification by others.

\subsection{The smallest $r$ for $t=s$} 
\seclabel{smallest-r}

We first consider the case of an SBP operator of order $2s$ in the
interior and $t=s$ on the boundary.  It can be readily seen that if a
solution exists for a particular choice of $(s,t,r)$, then this is
also a solution for $(s,t,r')$ for any $r'>r$.  That is, if the answer
to \qref{main} is affirmative for a particular choice of parameters,
the answer is also affirmative if the finite difference orders are
left unchanged and the boundary closure size is increased.  Thus, it
is possible to perform a bisection search to locate the smallest $r$
for which an SBP operator with the given choice $s=t$ exists.
\tabref{s-results} presents the results of this parameter sweep.  The
table also gives the dimension of the solution space of $P_h$ and the
value of \Eqref{opt}.

As stated previously, the present discussion does not concern the
actual construction of an SBP operator, but merely the question of
existence.  However, once a suitable norm $P_h$ is found,
\Eqref{general-system} is then guaranteed solvable for the block $B$
of an SBP difference operator $D_h$.  Included in \tabref{s-results}
is a column titled ``dof $D_h$'', which reports (for some choices of
$s$) the dimension of the solution set for $D_h$ with the choice of
$P_h$ described in Remark~\ref{rem:choice-of-P}.  In
\secref{opt}, we discuss a method for choosing a particular $D_h$
from this solution set.

To our knowledge, the results for $s>5$ are unknown in the literature.
Of particular interest is the nonlinear dependence of $r$ on $s$ (see
Remark~\ref{rem:non-square}).  Based on previous results for the cases
$s=2$, $3$ and $4$, it might be expected that SBP operators exist for
all $r=2s=2t$ (the case considered in~\cite{Kreiss1974}).  However, by
the nature of the parameter sweep conducted here, we conclude that,
for a given $s$, no SBP operator (with $t=s$) exists for $r$ smaller
than the value given in the table.  The case $s=5$, $r=11$ has already
been reported (see, e.g., \rrefs{Diener2007,Mattsson2013}).  However,
while a footnote of \rref{Diener2007} states that the choice $s=5$,
$r=10$ ``did not result in a positive definite norm'', no proof or
explanation is given.

\subsection{The largest $t$ for $r=2s$}

\begin{table}
  \centering
  \begin{tabular}{ccccl}
    $s$ & $t$ & $r$ & dof $P_h$  & $\min_ix_i$ \\
    \hline
    1 & 1 & 2  & 0 & 5.000e-01 \\
    2 & 2 & 4  & 0 & 3.542e-01 \\
    3 & 3 & 6  & 0 & 3.159e-01 \\
    4 & 4 & 8  & 0 & 2.575e-01 \\
    5 & 4 & 10  & 2 & 3.367e-01 \\
    6 & 5 & 12  & 2 & 2.997e-01 \\
    7 & 6 & 14  & 2 & 9.682e-03 \\
    8 & 6 & 16  & 4 & 2.992e-01 \\
    9 & 6 & 18  & 6 & 3.207e-01 \\
    10 & 7 & 20  & 6 & 2.923e-01 \\
    11 & 7 & 22  & 8 & 3.088e-01 \\
    12 & 8 & 24  & 8 & 2.504e-01 \\
    13 & 8 & 26  & 10 & 2.980e-01 \\
    14 & 9 & 28  & 10 & 4.622e-02 \\
    15 & 9 & 30  & 12 & 2.858e-01 \\
  \end{tabular}
  \caption{For the case $r=2s$, the table reports the \emph{largest} value of 
    $t$ for which an SBP operator exists.  When $P_h$ is non-unique, 
    the number of degrees of freedom in $P_h$ is reported as 
    ``dof $P_h$''.  The
    value of \Eqref{opt} is reported as $\min_ix_i$.}
  \tablabel{t-not-s}
\end{table}

From the previous results, it is clear that asking for $r=2s=2t$ is,
in general, too restrictive and, if we require that $D_h$ have its
boundary order of accuracy half as large as its interior order of
accuracy, the size of the boundary closure must grow faster than $2s$.
As a second application of the algorithm, we consider the opposite
question.  Suppose we wish to have $r=2s$.  What is the largest
boundary order of accuracy $t$ for which an SBP operator exists?
\tabref{t-not-s} presents the results of such a parameter study.
Evidently, it is not difficult to find SBP operators with $r=2s$,
provided $t$ is allowed to grow more slowly than $s$.

\section{Optimization of the derivative operator}
\seclabel{opt}

\begin{figure}
  \centering
  \includegraphics[width=0.45\textwidth]{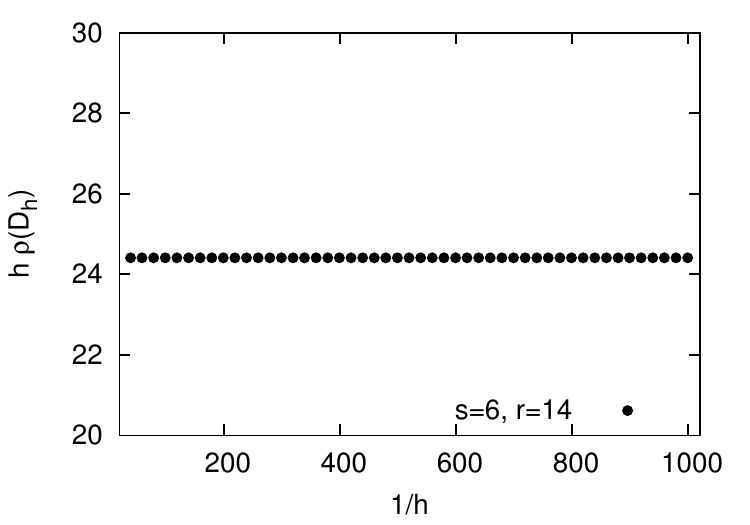}
  \includegraphics[width=0.45\textwidth]{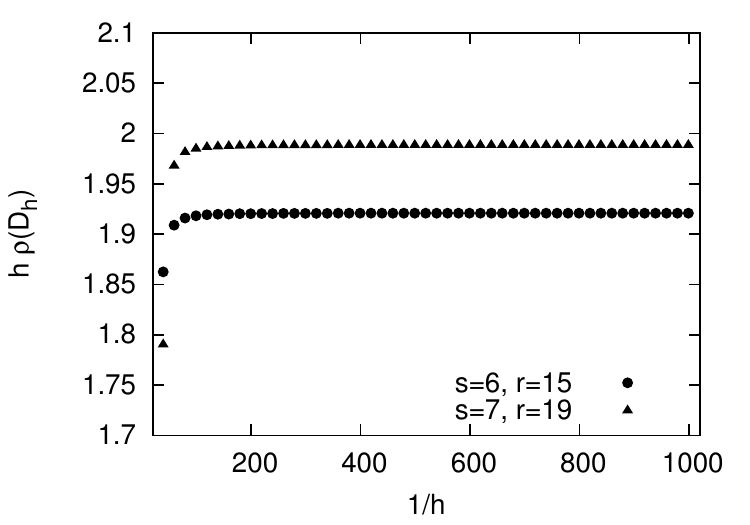}
  \caption{On the left, the spectral radius as a function of $1/h$ for
    the optimized $s=t=6, r=14$ SBP operator.  On the right, the
    spectral radii as functions of $1/h$ for the optimized $s=t=6,
    r=15$ and $s=t=7, r=19$ SBP operators.}
  \figlabel{spec-rad}
\end{figure}

Another interesting observation about the results presented in
\tabref{s-results} is that the number of degrees of freedom in $D_h$
grows rapidly with increasing $s$.  At the end of the algorithm
described in \secref{algorithm} we are generally left, not with a
single SBP operator, but with an entire linear manifold:
\begin{equation}
  \eqlabel{D-manifold}
  D_h = D_0 + \sum_j\xi_jD_j,
\end{equation}
where $j$ varies through all degrees of freedom.  Although the primary
concern of the present paper is the \emph{existence} of SBP operators,
the question of ``Which is best?''  is also important.  As described
in \rref{Diener2007}, there are a wide range of options for defining
``best''.  Rather than consider each of these, we focus on a
particular objective function: minimizing the spectral radius
$\rho(D_h)$.  This objective function is interesting because it
controls the CFL condition for explicit PDEs solvers.  Moreover, it is
unique among the objective functions considered in the reference in
that it is non-convex in the parameters $\xi_j$ of $D_h$, and
therefore difficult to optimize globally.  Hence the remark in
\rref{Diener2007}: ``Therefore, when we refer to minimizing the
spectral radius, we perform a numerical minimization and do not claim
that we have actually found a global minimum.''

In this paper, we suggest an alternative to minimizing the spectral
radius directly.  The key point is that, although $\rho(C)$ is not a
convex function in the entries of $C$ in general, it \emph{is} convex
for normal matrices $C$.  And, although $D_h$ is not a normal matrix
in general, it is close in some sense to a normal matrix, because
\begin{equation*}
  \left(P_hD_h - \frac{1}{2}Q\right) + \left(P_hD_h - \frac{1}{2}Q\right)^T
  = 0
\end{equation*}
(see \Eqref{energy}).  Since the surrogate matrix $P_hD_h -
\frac{1}{2}Q$ is skew-symmetric and therefore normal, its spectral
radius agrees with its operator $2$-norm and, thus, is a convex
function of its entries.  Moreover, in this norm
\begin{equation*}
  \rho(D_h) \le \|D_h\| \le \|P_h^{-1}\|\cdot\|P_hD_h\| \le
  \|P_h^{-1}\|\cdot\left(\|P_hD_h-\frac{1}{2}Q\| + \frac{1}{2}\right).
\end{equation*}
Provided $\|P_h\|$ is not too large, minimizing the norm of the
surrogate matrix tends to make the spectral radius of $D_h$ small.
Defining
\begin{equation*}
  C_0 = P_hD_0 - \frac{1}{2}Q,\qquad C_i = P_hD_i,\quad\text{and}\quad
  C(\xi) = C_0 + \sum_j\xi_jC_j,
\end{equation*}
the goal is to minimize $\|C(\xi)\|$ with respect to $\xi=(\xi_j)$.
It turns out that this problem can be easily transformed into a
Semidefinite Program (SDP)~\cite[Sec.~4.6.3]{Boyd2004}, treatable by a
number of standard solvers.

Our implementation of this idea is to choose a particular $N$ (we
chose $N=100$ for our examples) and to numerically minimize the norm
of the surrogate $\|C(\xi)\|$ with respect to $\xi$.  Unlike in the
previous section, there is no apparent need to perform this
optimization in exact arithmetic.  Instead, the elements of the $C_i$
are evaluated in double precision and are used to set up the SDP,
which is then solved through the \texttt{cvxopt}
package~\cite{cvxopt}.  Once the optimal $\xi$ is found, the
corresponding $D_h$ is formed from \Eqref{D-manifold}.

Using this technique on the case $s=t=7$, $r=19$ (with a
55-dimensional search space) we located an SBP operator such that
$\rho(D_h) \approx 2/h$, as verified with several choices of $h$.
Applying the same technique to the case $s=t=6$, $r=14$, however, did
not produce suitable results, as might be expected from careful
inspection of \tabref{s-results}.  In particular, the value
$\min_ix_i$ associated with this case is a very small number, implying
that $\|P_h^{-1}\|$ is large.  When we attempted to minimize the
surrogate matrix in this case the resulting SBP operator exhibited
(numerically) the scaling $\rho(D_h) \approx 24/h$---significantly
larger than one might wish.  This problem can be remedied as follows.

Recall that if the SBP equations are solvable for $(s,t,r)$, then they
are solvable for $(s,t,r')$ for any $r'>r$.  In general, increasing
$r$ leads to a larger number of degrees of freedom in both $P_h$ and
$D_h$.  So, it is reasonable to ask whether choosing $r>14$ might
improve the result in the case $s=6$.  With the choice $s=6$, $r=15$
there is an SBP operator with $\min_ix_i\approx 0.24$.  In this case,
the resulting SBP operator exhibits the scaling $\rho(D_h) \approx
1.92/h$.  Thus, we conclude, that the smallest $r$ for which an SBP
operator exists is not necessarily the best choice of $r$.
Apparently, it is useful to choose an $r$ for which $\|P_h^{-1}\|$ is
not too large.  

\begin{remark}
  \label{rem:best-P-norm}
  It is interesting to note that, although the objective function in
  \Eqref{opt} was not chosen specifically for this property, a
  side-effect of the algorithm described in this paper is to choose,
  among all possible $P_h$, the one with smallest $\|P_h^{-1}\|$.
\end{remark}

\figref{spec-rad} shows the numerically computed spectral radii of the
operators described in this section as functions of $1/h$.  The
coefficients for the optimized $s=6$ ($r=15$) and $s=7$ operators are
included online as supplementary data for this paper, as described in
\appref{coeffs}.

\section{Numerical experiments}\seclabel{numerics}

\begin{figure}
  \centering
  \includegraphics[width=0.48\textwidth]{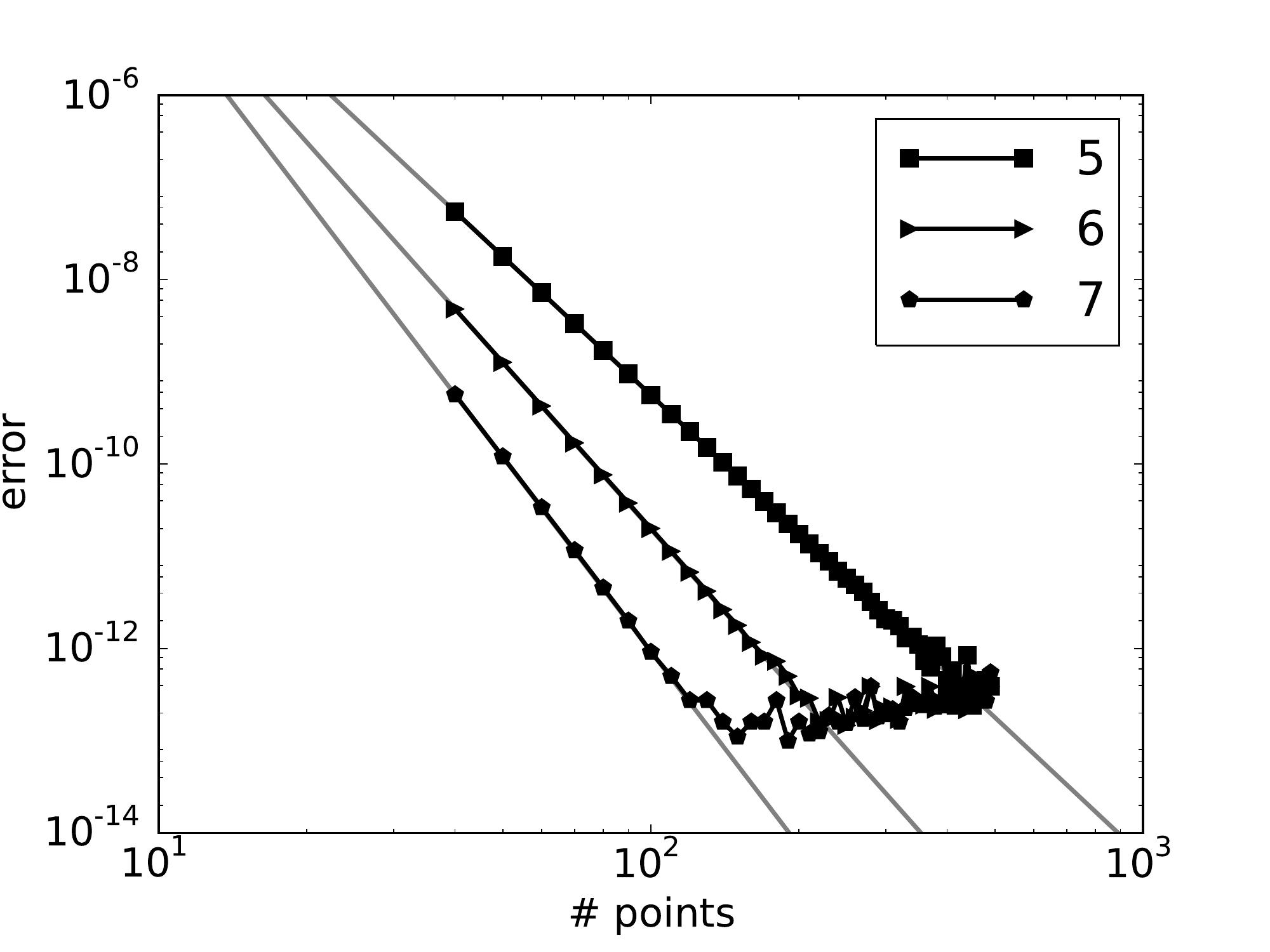}
  \caption{The $\ell^\infty$ error in approximating the first
    derivative of the function $f(x)=e^x$ on the interval $[0,1]$
    plotted against the number of sample points for several SBP
    operators suggesting the appropriate orders of convergence.  The
    integer values in the legend refer to the boundary order ($t=s$)
    of the SBP operator.  The gray lines denote $5$th, $6$th, and
    $7$th order slopes respectively.}
  \figlabel{numerics-convergence}
\end{figure}

As a first test of the SBP operators, we consider a simple convergence
study.  Taking $N$ uniformly spaced sample values of the function
$x\mapsto e^x$ on the interval $[0,1]$, we compare the numerically
computed derivative to the true derivative in $\ell^\infty$ norm.
\figref{numerics-convergence} shows the results of this study.  The
error is plotted against the number of grid points for three SBP
operators.  The integer values ($5$, $6$, and $7$) in the legend are
the values of $s$ for each operator.  All operators have boundary
order $t=s$ and interior order $2s$.  The width of the closure is
chosen to be as small as possible (see~\tabref{s-results}) except in
the case $t=s=6$, in which case $r=15$ is chosen as described in
\secref{opt}.  Note in particular that $P$ is selected as described in
\secref{lp}.  When the SBP operator is not unique, $D$ is selected via
the optimization problem described in \secref{opt}.  The results in
\figref{numerics-convergence} demonstrate that the constructed
operators exhibit the appropriate orders of accuracy.

A more interesting test is to consider the operators' performance in
solving the advection PDE
\begin{equation*}
  u_t + u_x = 0\qquad x\in(0,1000),\quad t>0
\end{equation*}
with initial and boundary conditions
\begin{equation*}
  u(x,0) = 0,\qquad u(0,t) = g(t) = \exp\left( -a(t+10)^2 \right)
\end{equation*}
where
\begin{equation*}
  a = \frac{-\log\left(10^{-16}\right)}{100}.
\end{equation*}
The weight $a$ in the Gaussian is designed so that the function is
essentially supported (within a tolerance of $10^{-16}$) in an
interval of width $20$.  When the PDE is solved to time $t=1000$, the
Gaussian hump moves in from the left boundary and traverses the domain
to end centered over the point $x=990$ (essentially supported in the
interval $[980,1000]$ (see \figref{compare-advect}).

\begin{figure}
  \centering
  \includegraphics[width=0.48\textwidth]{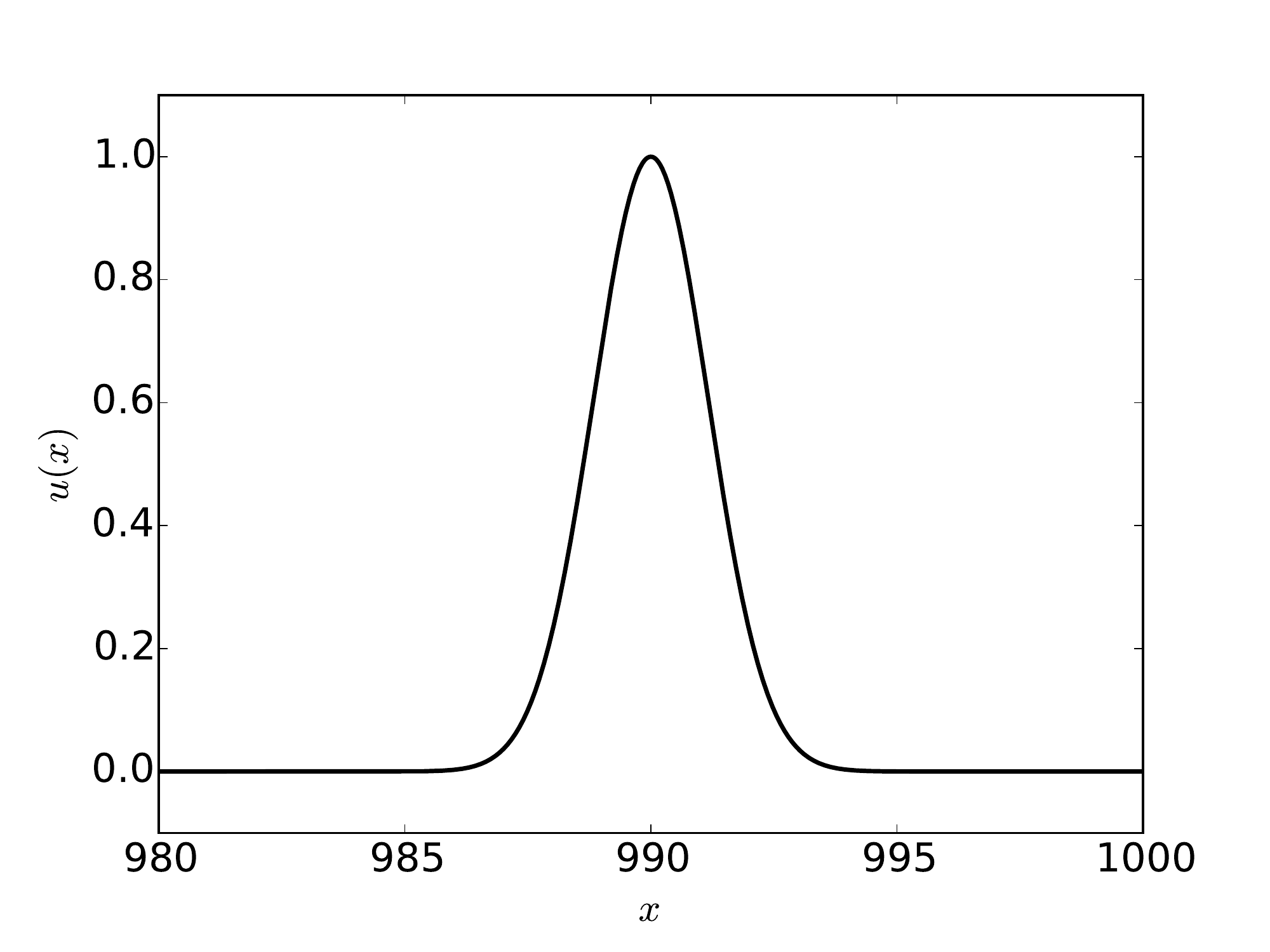}  
  \includegraphics[width=0.48\textwidth]{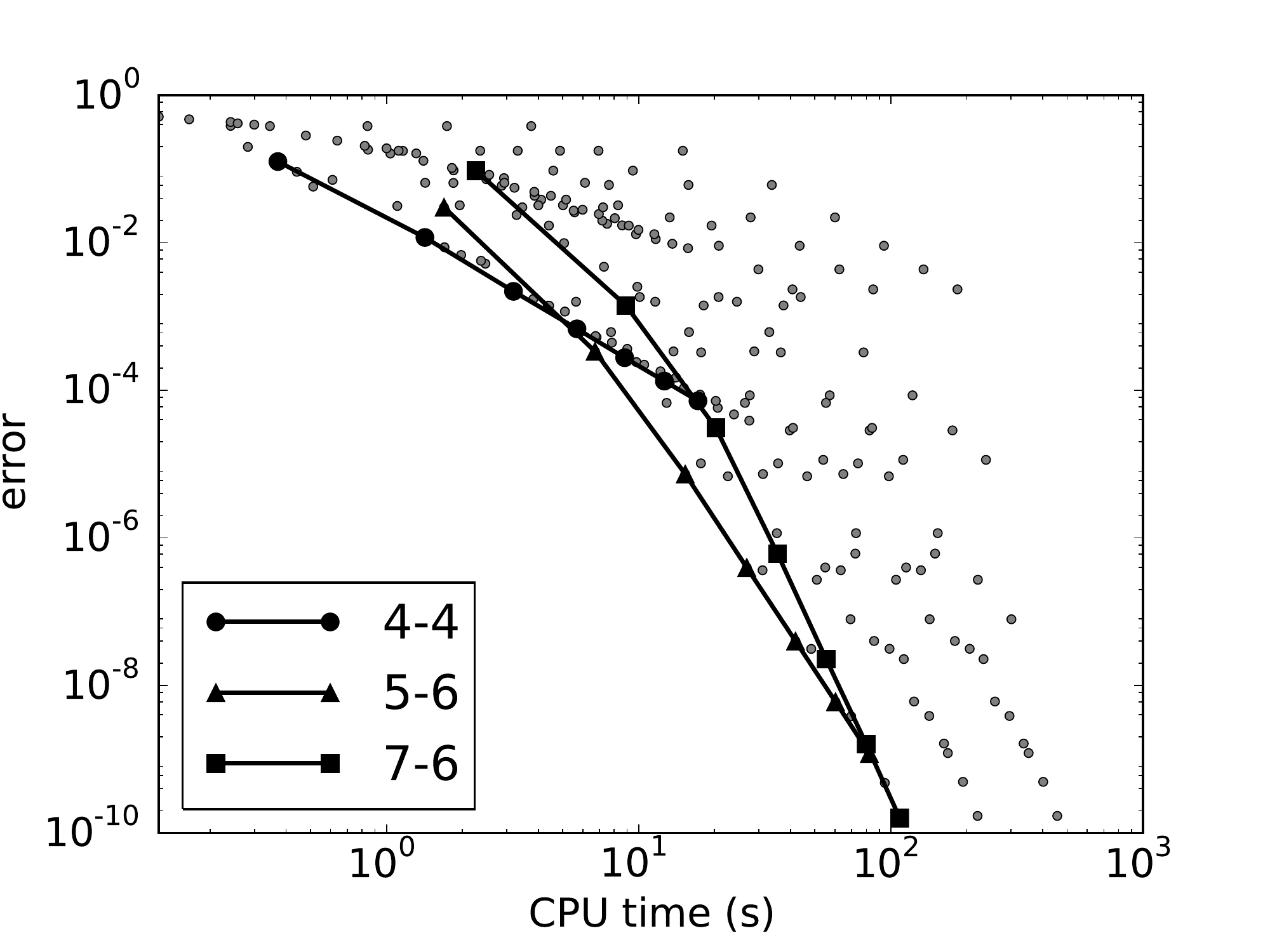}
  \caption{(left) Solution to the advection example of
    \secref{numerics} at time $t=1000$. (right) Error as a function of
    CPU time for the advection example using a variety of SBP
    operators and Adams-Bashforth methods.  Points labeled in the form
    $s$-$q$ indicate the results using an SBP operator with interior
    order $2s$ and Adams-Bashforth time integration of order $q$.}
  \figlabel{compare-advect}
\end{figure}

The purpose of the present study is to explore the best choice of
method for solving the problem.  The answer, of course, depends on the
definition of \emph{best} and also on the methods available for
comparison.  For this particular study, \emph{best} shall mean that
the method achieves a given accuracy in the shortest amount of CPU
time.  In this case, the CPU is a 1.6GHz desktop Intel Xeon processor
running an advection solver written in Fortran.  Spatial derivatives
were approximated using SPB operators of interior order from $4$ to
$14$ and temporal integration was performed by $q$th order
Adams-Bashforth (AB$q$) with $q\in\{3,4,6,7,8\}$.  (Second- and
fifth-order AB are excluded from consideration because their stability
regions include an insufficient amount of the imaginary axis.)  These
options are summarized in \tabref{CFL}.  As in the previous test, we
selected SBP operators with $t=s$ and with $r$ as small as possible
except for the $s=6$ case, with $P$ and $D$ generated as described in
Sections~\ref{sec:lp} and~\ref{sec:opt} respectively.  The CFL scaling
used to ensure stability is given by the product of cfl$_{1}$ and
cfl$_{2}$ given in the table, with the time step size $k$ and spatial
step size $h$ related via cfl$_{1}\cdot$cfl$_{2}k = h$.  The boundary
conditions were enforced using a simultaneous approximation term (SAT)
approach~\cite{Carpenter1994}, with the semidiscrete equations taking
the form
\begin{equation*}
  v_t + D_hv = -(v^Te_0-g(t))P_h^{-1}e_0.
\end{equation*}

Asymptotically, with unlimited precision, it is clear that a
higher-order method must always outperform a lower-order one.
However, for any given problem and a given meaningful range of
discretization step sizes in finite precision, it is not clear which
choice of spatial or temporal order will achieve a given accuracy in
the shortest time.  In order to compare the methods, we performed a
series of computations using each choice of SBP operator paired with
each AB integrator, each for the choices
$N\in\{2000,4000,6000,8000,10000,12000,14000\}$ grid points on the
interval $[0,1000]$--giving a total of $6\times 5\times 7=210$
computations.  In each computation, the advection problem was solved
to time $t=1000$, and the maximum absolute error (computed on the
computational grid) at this final time was recorded.

The results of the tests are shown in a scatter plot in
\figref{compare-advect}.  To simplify the presentation, we have chosen
to explicitly identify only three particular methods in the cloud of
points.  These methods are labeled with the form $s$-$q$ in the
legend, referring to the combination of AB$q$ and the SBP operator
with interior order of accuracy $2s$.  For this configuration, the SBP
operator with $s=t=4$ paired with AB4 is most effective for errors
down to approximately $10^{-4}$.  If smaller errors are required, the
$s=t=5$ SBP operator paired with AB6 performs well until approximately
the error level $10^{-10}$, where it is overtaken by the $s=t=7$ SBP
operator paired with AB6.  

Although it is impossible to generalize the results of these tests to
predictions for PDEs solvers in general, the outcome does illustrate a
general principle suggested by common sense---there is no single
\emph{best} numerical method for all problems.  Unsurprisingly, if
moderate errors are tolerable, a lower-order method does better; the
CFL conditions are less stringent than for high-order methods and the
constants preceding the leading-order error terms tend to be smaller.
As the error tolerance decreases, however, the asymptotic accuracy of
the higher-order methods start to win out.  It seems a difficult task
to determine \emph{a priori} which spatial and temporal order of
accuracy will lead to the desired results with the least expenditure
of computational time.  However, as the example illustrates, there may
be realistic instances when very high-order SBP operators are more
efficient than lower-order ones.

\begin{table}
  \centering
  \begin{tabular}[t]{c|c|c|c}
    $s$ & $t$ & $r$ & cfl$_1$\\
    \hline
    2 & 2 & 4  & 1.4\\
    3 & 3 & 6  & 1.6\\
    4 & 4 & 8  & 1.8\\
    5 & 5 & 11 & 1.9\\
    6 & 6 & 15 & 2.0\\
    7 & 7 & 19 & 2.1
  \end{tabular}
  \hspace{3em}
  \begin{tabular}[t]{c|c}
    $q$ &  cfl$_2$\\
    \hline
    3 & 1.39 \\
    4 & 2.38 \\
    6 & 8.93 \\
    7 & 17.5 \\
    8 & 34.1 \\
  \end{tabular}
  \caption{SBP parameters (left) and Adams-Bashforth order (right) versus CFL multipliers for the advection example in \secref{numerics}.  For SBP parameters $(s,t,r)$ and Adams-Bashforth order $q$, the time step was selected so that cfl$_{1}\cdot$cfl$_{2}k = h$.}
  \tablabel{CFL}
\end{table}

\section{Conclusion and future research}
\seclabel{conclusion}

This paper introduces an algorithm that provably answers the question
of existence of diagonal-norm SBP operators parameterized by a triple
$(s,t,r)$, and demonstrates the need to move away from the standard
choice of $(s,t,r)=(s,s,2s)$ when $s\ge 5$.  Our hope is that this
approach will lead to further research in the field, and to this end,
we conclude with a list of what we consider to be interesting
directions of future research.

\paragraph{Floating point algorithms} As remarked in
\secref{algorithm}, the current algorithm relies crucially on the use
of exact arithmetic.  This appears to be the principal bottleneck
preventing the discovery of even higher-order SBP operators than those
presented in this text, since this representation leads to very large
memory requirements and computationally expensive arithmetic
operations.  It would be interesting to know if there exists a similar
algorithm for finding (approximate) SBP operators in floating point
arithmetic.  Even the use of a variable-precision library would be an
improvement over the need for rational numbers.

\paragraph{Alternative energy norms} The algorithm described in
\secref{lp} chooses, among all possible $P_h$, the one that maximizes
$\min_iP_{ii}$.  This choice is useful for two reasons.  First, if the
optimal value is non-positive, then we immediately conclude that no
SBP operator exists.  Moreover, as remarked in \secref{opt}, this
choice is good for the application of optimizing the spectral radius
of the SBP derivative operator.  On the other hand, if
\Eqref{p-system} has a manifold of solutions and if one of these
solutions is strictly positive, then, in fact, \Eqref{p-system} has an
infinite number of strictly positive solutions.  It is not clear how
the choice of $P_h$ might influence later steps of the algorithm.

\paragraph{Compact stencil sizes}  As can be seen from
\tabref{s-results}, the closure size, $r$, appears to grow rapidly as
$s$ increases.  For example, the $18$th order ($s=9$) centered
operator requires at least $28$ points for a $9$th order closure.
This derivative operator contains a $9$th order derivative
approximation with a stencil width of $28+9=37$.  It is not clear
whether a method with such a wide stencil would actually be useful in
applications, and it would be interesting to know if there are
generalizations of the SBP framework that can reduce this size.

\section*{Acknowledgments}
The authors are deeply indebted to Daniel Appel\"{o}, Jeremy Kozdon
and Anders Petersson for their feedback and suggested improvements on
early drafts of this manuscript.  Joshua Klarmann's work on this
project was sponsored by the McNair Scholars' Program and supported by
the National Science Foundation under Award No.~EPS-0903806 and
matching support from the State of Kansas through the Kansas Board of
Regents; further funding was received from a scholarship provided by
the College of Arts and Sciences at Kansas State University.

\appendix

\section{Coefficients of new SBP operators}
\applabel{coeffs} The coefficients for the new $6$th- and $7$th-order
SBP operators described in \secref{opt} are included online as text
files.  The files \texttt{P\_6\_6\_15.txt} and
\texttt{D\_6\_6\_15.txt} hold the coefficients for $P_h$ and $D_h$ of
the $6$th order method, respectively.  While \texttt{P\_7\_7\_19.txt}
and \texttt{D\_7\_7\_19.txt} hold the coefficients for $P_h$ and $D_h$
of the $7$th order method.  

The coefficients are scaled to the case $h=1$.  In the case of the
norm matrices $P_h$, the data are stored in rows of 2 columns.  Each
row holds a pair $(i,v)$ indicating that $\tilde{P}_{ii}=v$.  Only the
upper-left corner is given, since the lower-right corner can be
obtained from symmetry.  For the files containing $D_h$ coefficients,
each row contains a triple $(i,j,v)$ indicating that $d_{ij}=v$.
Again, only the upper-left corner is given.

\bigskip

\bibliographystyle{acm}
\biboptions{numbers,sort&compress}
\bibliography{AK_2015}

\end{document}